\setlist{noitemsep}
\numberwithin{figure}{subsection}
\numberwithin{equation}{subsection}
\newtheorem{theorem}[figure]{Theorem}
\newtheorem{lemma}[figure]{Lemma}
\newtheorem{corollary}[figure]{Corollary}
\theoremstyle{definition}
\theoremstyle{definition}
\newtheorem{remark}[figure]{Remark}
\theoremstyle{definition}
\theoremstyle{definition}
\theoremstyle{cited}
\newcommand{\w}{\text}
\newcolumntype{C}[1]{>{\centering\arraybackslash}p{#1}}
\title{\large \textbf{RECONSTRUCTION OF THE STACKY APPROACH TO DE RHAM COHOMOLOGY}}
\author{\small SHUBHODIP MONDAL}
\date{}
\begin{document}

\maketitle
\begin{abstract}
In this short paper, we use Tannakian reconstruction techniques to prove a result that explains how to reconstruct the stacky approach to de Rham cohomology from the classical theory algebraic de Rham cohomology via an application of the adjoint functor theorem. 
\end{abstract}

\let\thefootnote\relax\footnotetext{\\Affiliation: University of Michigan, Ann Arbor, MI, USA. \\
Email: smondal@umich.edu\\
\textbf{MSC classes}: 14F30, 14F40, 20G05 \\}

\section{\small{INTRODUCTION}} Let $k$ be a perfect field of characteristic $p >0.$ The stacky approach to de Rham cohomology due to Drinfeld recovers the theory of de Rham cohomology via cohomology of the structure sheaf of certain stacks \cite{drinfeld2018stacky}. More precisely, for a smooth scheme $X$ over $k,$ he constructs a stack $X^{\mathrm{dR}}$ such that $R\Gamma_{\mathrm{dR}}(X) \simeq R\Gamma (X^{\mathrm{dR}}, \mathscr O).$ The case of $X = \mathbb{A}_k^1$ here is particularly important. The ring scheme structure on $\mathbb{A}^{1}_k$ induces a ring stack structure on $\mathbb A^{1,\mathrm{dR}}$ which can be used to determine the stacks $X^{\mathrm{dR}}$ for any smooth scheme $X$ over $k$ (see \cite[Section 2.4]{LM21}). Let us use $\mathbb{G}_a$ to denote $\mathbb{A}^1_k$ with the enhancement of a ring scheme and $\mathbb{G}_a^{\w{dR}}$ to denote $\mathbb{A}^{1, \w{dR}}$ equipped with the enhancement of a ring stack. The ring stack $\mathbb{G}_a^{\mathrm{dR}}$ has been used in \cite[Theorem 1.6]{LM21} to give a proof of Drinfeld's refinement of the Deligne--Illusie theorem~(\textit{c.f.}~\cite[Remark 5.16]{BhaLur2}). Actually, all the endomorphisms of de Rham cohomology as a functor has been classified in \cite[Theorem 4.24]{LM21}, which really uses the ring stack $\mathbb{G}_a^{\mathrm{dR}}.$ This, of course, also recovers the Sen operators, which was also only recently observed due to the work of Drinfeld and Bhatt--Lurie, which is further studied in \cite[Section 3.5]{BhaLur}. \vspace{2mm}

Given the fundamental nature of these new results, one might naturally arrive at a slightly philosophical question that Illusie asked the author: \textit{does the ring stack $\mathbb{G}_a^{\mathrm{dR}}$ give any information about the theory of de Rham cohomology that could not be seen otherwise?} The goal of this note is to prove that that is \textit{not} the case. The ring stack $\mathbb{G}_a^{\mathrm{dR}}$ is not an enrichment of the theory of de Rham cohomology, it is actually \textit{equivalent} to the theory of de Rham cohomology. In fact, it is “dual" to the theory of de Rham cohomology. \vspace{2mm}

The goal of this note is to prove the above claim precisely. To do so, we will develop the stacky approach to de Rham cohomology from the scratch by using the classical theory of algebraic de Rham cohomology. The tools we use to do so are only:

\begin{enumerate}

\item The classical algebraic de Rham complex, as considered in \cite{gro1}.

    \item One computation from derived de Rham cohomology, developed earlier by Illusie \cite{Ill2}, Beilinson \cite{bei} and Bhatt \cite{Bha12}.
    
    \item The theory of higher categories, as developed by Lurie in \cite{Lu}. In particular, we use the adjoint functor theorem and left Kan extensions.

\end{enumerate}{}
\textit{The main result is \cref{main} below.} We will need some preparations before stating it. Note that in above, $ \mathbb{G}_a^{\mathrm{dR}} := \mathrm{Cone}(\mathbb{G}_a^\sharp \to \mathbb{G}_a)$. For details on the cone construction, we refer to \cite[Section 1.3]{prismatization} or \cite[Section 2.1]{LM21}. The Tannakian reconstruction techniques used in our paper is useful in more general situations concerning derived categories of representations, see \cref{easthall}.

\subsection{Warning} This note is only about reconstruction of the stacky approach to \textit{de Rham cohomology.} We do not make any claims about reconstruction of the stacks $\Sigma, \Sigma', \Sigma''$ from \cite{prismatization} in the context of absolute prismatic cohomology.

\subsection{Notations and categorical prerequisites}

\begin{enumerate}\label{not}

\item We fix a prime $p.$ Let $k$ be a perfect field of characteristic $p.$ Let $\mathrm{Alg}_k$ denote the category of ordinary algebras and $\mathrm{Alg}_k^{\mathrm{sm}}$ denote the category of smooth $k$-algebras.

\item Let $\mathbb{G}_a$ denote the affine line $\mathbb{A}^1$ viewed with the enhancement of a ring scheme. Let $\mathbb{G}_a^\sharp$ denote the divided power completion of $\mathbb{G}_a$ at the origin, which has a group scheme structure. In fact, $\mathbb G_a^\sharp$ has the natural structure of a $\mathbb{G}_a$-module. See \cite[Section 3.2 - 3.5]{prismatization} for more details.

    \item We will let $\mathcal{S}$ denote the $\infty$-category of spaces, or equivalently the $\infty$-category of $\infty$-groupoids, or the $\infty$-category of anima.
    
    \item We let $\mathrm{ARings}_k$ denote the $\infty$-category of animated $k$-algebras. There is a forgetful functor $\mathrm{ARings}_k \to \mathcal{S}$ which preserves small limits.
    
    \item All schemes and stacks (in the classical sense) are identified with the functors they represent. More precisely, an object in the category $\mathrm{Fun}(\mathrm{Alg}_k, \mathcal{S})$ is simply called a \textit{stack}. An object in the category $\mathrm{Fun}(\mathrm{Alg}_k, \mathrm{ARings}_k)$ is simply called a \textit{ring stack.} There is a natural forgetful functor $$\mathrm{Fun}(\mathrm{Alg}_k, \mathrm{ARings}_k) \to \mathrm{Fun}(\mathrm{Alg}_k, \mathcal S).$$
    
    \item Let $C,D$ be two $\infty$-categories. Let $\mathrm{Fun}^L(C,D)$ be the category of functors that are left adjoints and let $\mathrm{Fun}^R(C,D)$ be the category of functors that are right adjoints. Then $\mathrm{Fun}^L(C,D)^{\mathrm{op}} \simeq {\mathrm{Fun}^R(C,D)}.$ \cite[Prop.~5.2.6.2]{Lu}.
    
    \item Let $C$ be a presentable $\infty$-category and let $F: \mathrm{ARings}_k \to C$ be a colimit preserving functor. By the adjoint functor theorem \cite[Prop.~5.5.2.9]{Lu}, we have a right adjoint $G: C \to \mathrm{ARings}_k.$ Composing along $\mathrm{ARings}_k \to \mathcal{S}$ gives an accessible limit preserving functor $C \to \mathcal{S},$ which must be corepresentable by an object $M \in C.$ We note that $M \simeq F(k[x]).$ This follows via adjunction and the fact that the forgetful functor $\mathrm{ARings}_k \to \mathcal{S}$ is corepresented by $k[x].$
    
    \item Finally, let $\mathrm{CAlg}(D(k))$ denote the $\infty$-category of commutative algebra object in the derived $\infty$-category of $k$-vector spaces, or equivalently $E_\infty$-algebras over $k.$ There is a colimit preserving functor $\mathrm{ARings}_k \to \mathrm{CAlg}(D(k))$ that we will somewhat abuseively call $\mathrm{id}.$
    
    \item We use cohomological conventions, i.e., the fullsubcategory of connective objects in $D(k)$ is denoted by $D(k)^{\le 0}.$ They are characterized by the property that the cohomology $H^i(\cdot)=0$ for $i>0.$
    
    \item For an object $T \in \mathrm{CAlg}(D(k)),$ we will let $\mathrm{LMod}_{T}$ denote the derived $\infty$-category of left modules over $T.$ Thinking of $T$ as a ring spectrum, they are modeled by $T$-module spectra. When $T$ is an ordinary ring, it is equivalent to the derived $\infty$-category $D(A),$ which can be modeled by chain complexes over $A.$

\end{enumerate}{}

\subsection{Acknowledgements} I am thankful to Benjamin Antieau, Bhargav Bhatt and Luc Illusie for helpful conversations and their encouraging comments on this paper. I also thank the support of NSF grant DMS \#1801689.

\section{Recovering the stacky approach from de Rham cohomology}
Let $A$ be a smooth algebra over $k.$ Let $\Omega_A^*$ denote the algebraic de Rham complex. Then $\Omega_A^*$ has the additional structure of a commutative differential graded algebra. The latter structure can be used to view $\Omega_A^*$ as an object in the more flexible $\infty$-category of commutative algebra objects in the derived $\infty$-category of $k$-vector spaces, denoted as $\mathrm{CAlg}(D(k)).$

This gives a functor $\mathrm{dR}: \mathrm{Alg}_k^{\mathrm{sm}} \to \mathrm{CAlg}(D(k))$ from the category of smooth $k$-algebras to $\mathrm{CAlg}(D(k)).$ Via left Kan extension along the inclusion $\mathrm{Alg}_k^{\mathrm{sm}} \to \mathrm{ARings}_k,$ we can equivalently view the above as a functor 

$$ \mathrm{dR}: \mathrm{ARings}_k \to \mathrm{CAlg}(D(k)). $$
Here and onwards, $\mathrm{ARings}_k$ denotes the $\infty$-category of animated $k$-algebras. \vspace{2mm}

So far, we have been forcefully employing the language of $\infty$-categories to the situation. But now it is time to collect the rewards. In the above set up, the functor $\mathrm{dR}$ preserves colimits. This follows from \cite[Prop.~5.5.8.15]{Lu} and was observed already in \cite{Bha12}. By the adjoint functor theorem, $\mathrm{dR}$ has a \textit{right adjoint}. Let us call the right adjoint 

$$\mathrm{dR}^{\vee}: \mathrm{CAlg}(D(k)) \to \mathrm{ARings}_k. $$

\vspace{2mm} 
Note that the category of ordinary $k$-algebras $\mathrm{Alg}_k$ is a full subcategory of $\mathrm{CAlg}(D(k)).$ By restricting the functor $\mathrm{dR}^{\vee}$ above, we get a functor

$$ \mathrm{dR}^{\vee}_{\circ}: \mathrm{Alg}_k \to \mathrm{ARings}_k. $$

\begin{theorem}\label{main} We have a natural isomorphism of ring stacks
$$\mathrm{dR}^{\vee}_{\circ} \simeq \mathrm{Cone}(\mathbb{G}_a^\sharp \to \mathbb{G}_a).$$
\end{theorem}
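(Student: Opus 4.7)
The plan is to apply the corepresentability observation from item~7 of the preliminaries: for any $R \in \mathrm{CAlg}(D(k))$, the underlying space of $\mathrm{dR}^{\vee}(R)$ is
\[
\mathrm{dR}^{\vee}_{\circ}(R)\;\simeq\; \mathrm{Map}_{\mathrm{CAlg}(D(k))}\bigl(\mathrm{dR}(k[x]),\, R\bigr),
\]
with the animated ring structure on the right induced by the cogroup structure on $k[x]$ reflecting the ring scheme $\mathbb{G}_a$. Since $k[x]$ is smooth, $\mathrm{dR}(k[x])$ is computed by the classical de Rham complex $\Omega^{\bullet}_{k[x]/k}$ inside $\mathrm{CAlg}(D(k))$. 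The natural projection $\pi\colon \Omega^{\bullet}_{k[x]/k} \to k[x]$ onto the degree-zero piece is a CDGA map, and induces a natural map $\mathbb{G}_a(R) \to \mathrm{dR}^{\vee}_{\circ}(R)$; the task is to identify its homotopy fiber with $\mathbb{G}_a^{\sharp}(R)$ and thereby realize $\mathrm{dR}^{\vee}_{\circ}$ as $\mathrm{Cone}(\mathbb{G}_a^{\sharp} \to \mathbb{G}_a)$.

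The key step uses the single input from derived de Rham cohomology: the Illusie--Beilinson--Bhatt computation identifies the Hodge-completed derived de Rham complex of the augmentation $k[x] \twoheadrightarrow k$, $x\mapsto 0$, with the divided power envelope $k\langle x\rangle^{\wedge}$, i.e.~with the coordinate ring of $\mathbb{G}_a^{\sharp}$. Translating this back through the colimit-preservation of $\mathrm{dR}$, one obtains a presentation of $\Omega^{\bullet}_{k[x]/k}$ in $\mathrm{CAlg}(D(k))$ as the $E_\infty$-algebra of derived invariants for the translation action of $\mathbb{G}_a^{\sharp}$ on $\mathbb{G}_a$ --- equivalently, as a Chevalley--Eilenberg/bar-type construction glueing $k[x]$ and $k\langle x\rangle$.

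Applying $\mathrm{Map}_{\mathrm{CAlg}(D(k))}(-,\, R)$ turns this colimit presentation into a limit presentation on the $R$-points side, producing a natural fiber sequence
\[
\mathbb{G}_a^{\sharp}(R) \longrightarrow \mathbb{G}_a(R) \longrightarrow \mathrm{Map}_{\mathrm{CAlg}(D(k))}\bigl(\Omega^{\bullet}_{k[x]/k},\,R\bigr)
\]
of animated abelian groups. This is exactly the defining cofiber sequence for $\mathrm{Cone}(\mathbb{G}_a^{\sharp} \to \mathbb{G}_a)(R)$, so we obtain $\mathrm{dR}^{\vee}_{\circ}(R) \simeq \mathrm{Cone}(\mathbb{G}_a^{\sharp} \to \mathbb{G}_a)(R)$ naturally in $R$. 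The identification upgrades from animated abelian groups to ring stacks because every map in sight is compatible with the cogroup structures coming from $\mathbb{G}_a$.

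The main obstacle will be the second step: extracting from the Illusie--Beilinson--Bhatt input a clean identification of $\Omega^{\bullet}_{k[x]/k}$ with the appropriate Chevalley--Eilenberg/bar construction on $(k[x],\, k\langle x\rangle)$ inside $\mathrm{CAlg}(D(k))$, and then checking that the induced ring-stack structure on the cone matches the one on $\mathrm{dR}^{\vee}_{\circ}$. Once this is in place, the theorem follows formally from the adjoint functor theorem and Tannakian reconstruction.
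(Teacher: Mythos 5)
Your opening moves coincide with the paper's: corepresentability of $\mathrm{dR}^{\vee}_{\circ}$ by $\mathrm{dR}(k[x])$, the map $\mathbb{G}_a \to \mathrm{dR}^{\vee}_{\circ}$ induced by the projection $\mathrm{gr}^0\colon \mathrm{dR}\to\mathrm{id}$, and the identification of its fiber with $\mathbb{G}_a^{\sharp}$ via the derived de Rham computation $k\otimes_{\mathrm{dR}(k[x])}k[x]\simeq D_x(k[x])$ (note: the relevant input is the \emph{non-completed} divided power envelope, not the Hodge-completed one). The gap is in the last step. A fiber sequence $\mathbb{G}_a^{\sharp}\to\mathbb{G}_a\to \mathrm{dR}^{\vee}_{\circ}$ is \emph{not} ``exactly the defining cofiber sequence'' for the cone. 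First, $\mathrm{Cone}(\mathbb{G}_a^{\sharp}\to\mathbb{G}_a)$ is a cofiber computed in fpqc sheaves, so its $B$-points are not the cofiber of $\mathbb{G}_a^{\sharp}(B)\to\mathbb{G}_a(B)$: sheafification creates points that lift to $\mathbb{G}_a$ only after a faithfully flat cover (indeed $H^1_{\mathrm{fppf}}(\mathrm{Spec}\,B,\mathbb{G}_a^{\sharp})\neq 0$ in general). Second, and more seriously, knowing the fiber of a map does not determine its target: the fiber sequence only produces a comparison map $\mathrm{Cone}(\mathbb{G}_a^{\sharp}\to\mathbb{G}_a)\to \mathrm{dR}^{\vee}_{\circ}$ (and even for that you must first verify that $\mathrm{dR}^{\vee}_{\circ}$ is an fpqc sheaf, which is \cref{easy}), and this map is an equivalence if and only if $\mathbb{G}_a\to\mathrm{dR}^{\vee}_{\circ}$ is an effective epimorphism of sheaves. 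Nothing in your argument addresses this surjectivity; compare the fiber sequence $0\to B\to B\oplus D$, whose fiber is $0$ but whose third term is not the cofiber of $0\to B$.

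The related non-formal step is your claim that applying $\mathrm{Map}_{\mathrm{CAlg}(D(k))}(-,R)$ to the descent/invariants presentation of $\Omega^{\bullet}_{k[x]/k}$ yields the desired limit diagram of $R$-points. The invariants presentation is a \emph{limit} (a totalization of a Cech--Alexander-type cosimplicial object), and mapping out of a limit does not commute with anything. What you actually need is the statement $\mathcal{Y}(B)\simeq \mathrm{Map}_{\mathrm{CAlg}(D(k))}(R\Gamma(\mathcal{Y},\mathscr{O}),B)$ for $\mathcal{Y}=\mathrm{Cone}(\mathbb{G}_a^{\sharp}\to\mathbb{G}_a)$, which is \cref{tan} and is the real content of the paper: it requires showing that $\mathscr{O}$ compactly generates $D_{\mathrm{qc}}(\mathcal{Y})$ so that $D_{\mathrm{qc}}(\mathcal{Y})\simeq \mathrm{LMod}_{R\Gamma(\mathcal{Y},\mathscr{O})}$ (\cref{what}), and this in turn rests on $B\mathbb{G}_a^{\sharp}$ having cohomological dimension one and on the unipotence of $\mathbb{G}_a^{\sharp}$. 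This is a special feature of $\mathcal{Y}$, not a formality: the analogous statement fails for $B\mathbb{G}_a$ by the Hall--Neeman--Rydh result recalled in \cref{easthall}. Your closing sentence does invoke ``Tannakian reconstruction,'' but the body of the argument treats as automatic exactly the step where it is needed. To complete the proof you should: (i) prove \cref{easy} to form the comparison map; (ii) prove the affineness statement $\mathcal{Y}(B)\simeq\mathrm{Map}(R\Gamma(\mathcal{Y},\mathscr{O}),B)$; and (iii) compute $R\Gamma(\mathcal{Y},\mathscr{O})\simeq \mathrm{dR}(k[x])$ by descent along $\mathbb{G}_a\to\mathcal{Y}$ against the Cech--Alexander complex, whence both sides of the theorem corepresent the same functor.
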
{}

\begin{proof}
Note that $\mathrm{dR}$ is naturally equipped with the Hodge filtration, which induces an arrow $\mathrm{gr}^0: \mathrm{dR} \to \mathrm{id}$ in $\mathrm{Fun}^{\mathrm L} (\mathrm{ARings}_k, \mathrm{CAlg}(D(k))).$ Here, $\mathrm{id}$ really means the natural functor $\mathrm{id}: \mathrm{ARings}_k \to \mathrm{CAlg}(D(k)),$ which preserves colimits. By the adjoint functor theorem we get an arrow $\mathrm{id}^{\vee} \to \mathrm{dR}^{\vee}$ in $\mathrm{Fun}^{\mathrm R} (\mathrm{CAlg}(D(k)), \mathrm{ARings}_k).$ Restricting along $\mathrm{Alg}_k \to \mathrm{CAlg}(D(k))$ gives an arrow 
$$\mathrm{id}^{\vee}_{\circ} \to \mathrm{dR}^{\vee}_{\circ} $$in $\mathrm{Fun}(\mathrm{Alg}_k, \mathrm{ARings}_k).$

\begin{lemma}
We have $\mathrm{id}^{\vee}_{\circ} \simeq \mathbb{G}_a$ as ring stacks (\cref{not} (5)). In particular, both are representable by schemes.
\end{lemma}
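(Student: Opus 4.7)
The plan is to apply Notation~(7) to the colimit-preserving functor $\mathrm{id}: \mathrm{ARings}_k \to \mathrm{CAlg}(D(k))$ in order to identify the underlying stack of $\mathrm{id}^{\vee}_{\circ}$, and then to upgrade this identification to one of ring stacks by invoking the universal property of the right adjoint directly.

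As a warm-up, Notation~(7) applied to $F = \mathrm{id}$ tells us that the composition of $\mathrm{id}^{\vee}$ with the forgetful functor $\mathrm{ARings}_k \to \mathcal{S}$ is corepresented by $\mathrm{id}(k[x]) = k[x]$. Hence for every $R \in \mathrm{Alg}_k$, the underlying space of $\mathrm{id}^{\vee}_{\circ}(R)$ is $\mathrm{Map}_{\mathrm{CAlg}(D(k))}(k[x], R)$, which is simply the underlying set of $R$: both $k[x]$ and $R$ are discrete, and $k[x]$ is the free ordinary $k$-algebra on one generator. This already identifies the underlying stack of $\mathrm{id}^{\vee}_{\circ}$ with $\mathbb{A}^1_k$; in particular it is representable by a scheme.

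To promote this to an equivalence of \emph{ring} stacks, I would argue via Yoneda inside $\mathrm{ARings}_k$. For arbitrary $S \in \mathrm{ARings}_k$ and $R \in \mathrm{Alg}_k$, the adjunction yields
$$\mathrm{Map}_{\mathrm{ARings}_k}(S, \mathrm{id}^{\vee}_{\circ}(R)) \;\simeq\; \mathrm{Map}_{\mathrm{CAlg}(D(k))}(\mathrm{id}(S), R).$$
Since $\mathrm{id}(S)$ is connective and $R$ is discrete, both sides reduce to the discrete set $\mathrm{Hom}_{\mathrm{Alg}_k}(\pi_0 S, R)$, which is in turn canonically $\mathrm{Map}_{\mathrm{ARings}_k}(S, R)$ with $R$ regarded as a discrete animated ring. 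Naturality in $S$ combined with Yoneda then produces an equivalence $\mathrm{id}^{\vee}_{\circ}(R) \simeq R$ in $\mathrm{ARings}_k$, functorial in $R \in \mathrm{Alg}_k$. This is precisely the identification with $\mathbb{G}_a$ inside $\mathrm{Fun}(\mathrm{Alg}_k, \mathrm{ARings}_k)$.

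The only substantive input requiring verification is the discreteness claim that $\mathrm{Map}_{\mathrm{CAlg}(D(k))}(T, R)$ is the set of $k$-algebra maps $\pi_0 T \to R$ whenever $T$ is connective and $R$ is ordinary; this encodes the well-known fact that the inclusion of ordinary $k$-algebras into connective $E_\infty$-algebras over $k$ is right adjoint to $\pi_0 = H^0$. Once granted, the argument is purely formal, so I do not foresee any serious obstacle beyond carefully tracking naturality.
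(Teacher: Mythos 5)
Your proposal is correct and follows exactly the route the paper intends: the paper's own ``proof'' is simply the remark that this is a definition chase via Notation~(7), and you have carried out that chase, using the corepresentability of the underlying-space functor by $\mathrm{id}(k[x])=k[x]$ and the adjunction $\pi_0 \dashv (\text{inclusion})$ between connective $E_\infty$-$k$-algebras and ordinary $k$-algebras to upgrade the identification to one of ring stacks. No gaps; the one input you flag (discreteness of $\mathrm{Map}_{\mathrm{CAlg}(D(k))}(T,R)$ for $T$ connective and $R$ ordinary) is indeed the standard fact you cite.
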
{}

\begin{proof}This is a definition chase that we omit. See \cref{not} (7).
\end{proof}{}
Therefore, we get a map $H: \mathbb{G}_a \to \mathrm{dR}^{\vee}_{\circ} $ of ring stacks. Note that taking the fibre (kernel) of the above map gives a functor $\mathrm{Ker}\, H: \mathrm{Alg}_k \to D(k)^{\le 0},$ where $D(k)^{\le 0}$ denotes the $\infty$-category of connective $k$-vector spaces (we use cohomological indexing) or in other words, animated $k$-vector spaces. We are going to identify $\mathrm{Ker}\, H$ explicitly.

\begin{lemma}[Bhatt]
We have an isomorphism $k \otimes_{\mathrm{dR}(k[x])} k[x] \simeq {D}_x(k[x]),$ where the latter denotes divided power envelope of $k[x]$ at the ideal $(x).$ The tensor product is taken along the map $\mathrm{gr}^0: \mathrm{dR}(k[x]) \to k[x].$
\end{lemma}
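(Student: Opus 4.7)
The plan is to identify the left-hand side with a relative derived de Rham cohomology, and then to recognize the latter as a divided power envelope via the Illusie--Beilinson--Bhatt computation.

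Write $\mathrm{dR}_{B/A}$ for the relative derived de Rham cohomology of a ring map $A \to B$; the functor $\mathrm{dR}$ in the paper coincides with $\mathrm{dR}_{-/k}$. The first step is a transitivity formula for derived de Rham cohomology: for any composition $A \to B \to C$ of commutative rings, there is a canonical equivalence
\[
\mathrm{dR}_{C/B} \;\simeq\; B \otimes_{\mathrm{dR}_{B/A}}^L \mathrm{dR}_{C/A},
\]
where the map $\mathrm{dR}_{B/A} \to B$ is the Hodge augmentation $\mathrm{gr}^0$ and the map $\mathrm{dR}_{B/A} \to \mathrm{dR}_{C/A}$ is induced by functoriality of absolute derived de Rham. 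This is a formal consequence of the base change properties of derived de Rham cohomology, equivalently of the symmetric monoidality of $\mathrm{dR}_{-/A}$ in its source variable. Specialising to $A = k$, $B = k[x]$, $C = k$ with the surjection $x \mapsto 0$, and using $\mathrm{dR}_{k/k} \simeq k$, one obtains
\[
\mathrm{dR}_{k/k[x]} \;\simeq\; k \otimes_{\mathrm{dR}(k[x])}^L k[x],
\]
with precisely the module structures appearing in the statement of the lemma.

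The second step is to compute $\mathrm{dR}_{k/k[x]}$. The closed immersion $\mathrm{Spec}(k) \hookrightarrow \mathbb{A}^1_k$ is cut out by the regular element $x$, and by the computation of Illusie--Beilinson--Bhatt advertised in item (2) of the introduction, the derived de Rham cohomology of such a regular closed immersion is identified with the divided power envelope:
\[
\mathrm{dR}_{k/k[x]} \;\simeq\; D_x(k[x]).
\]
Combining the two identifications yields the stated isomorphism.

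I expect the technical point to verify carefully is the naturality of the identifications, rather than either step in isolation: one must check that the $\mathrm{dR}(k[x])$-module structures implicit in the transitivity formula agree with those on the left-hand side of the lemma, and that the conventions on Hodge completion implicit in the definition of $\mathbb{G}_a^\sharp$ match those appearing in the Illusie--Beilinson--Bhatt computation. Neither of these is a substantive obstacle; they amount to a bookkeeping exercise.
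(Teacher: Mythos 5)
Your proof is correct and takes essentially the same route as the paper: the paper's own argument likewise identifies $k \otimes_{\mathrm{dR}(k[x])} k[x]$ with the relative derived de Rham cohomology $\mathbb{L}\mathrm{dR}_{k/k[x]}$ via base change along $\mathrm{gr}^0$, and then computes the latter as the divided power envelope using the conjugate filtration and the cotangent complex, i.e.\ the Illusie--Beilinson--Bhatt computation you cite. Your write-up merely spells out the transitivity formula and the lci input a bit more explicitly than the paper does.
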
{}

\begin{proof}
The key is to use that $$k \otimes_{\mathrm{dR}(k[x])} k[x] \simeq \mathbb L \mathrm{dR}_{k/k[x]},$$ where the right hand side denotes derived de Rham cohomology. The latter can be computed by using the conjugate filtration and the cotangent complex \cite[Lemma 3.29]{Bha12}.
\end{proof}{}

This shows that $\mathrm{Ker}\, H \simeq \mathbb{G}_a^\sharp$ as a functor from $\mathrm{Alg}_k \to D(k)^{\le 0}.$ In particular, $\mathrm{Ker}\, H$ is representable by an affine scheme.

\begin{lemma}\label{easy} The functor $\mathrm{dR}^{\vee}_{\circ}$ is an fpqc sheaf of animated rings.
\end{lemma}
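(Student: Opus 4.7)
The plan is to deduce fpqc descent for $\mathrm{dR}^{\vee}_{\circ}$ from two ingredients: the right-adjoint nature of $\mathrm{dR}^{\vee}$, and classical faithfully flat descent for the inclusion $\mathrm{Alg}_k \hookrightarrow \mathrm{CAlg}(D(k))$. Since the forgetful functor $\mathrm{ARings}_k \to \mathcal{S}$ preserves small limits (see \cref{not} (4)), it suffices to check that the underlying $\mathcal{S}$-valued functor is an fpqc sheaf.

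First I would fix a faithfully flat map $R \to R'$ in $\mathrm{Alg}_k$ and form its augmented Čech nerve $R \to R'^{\bullet}$ using the ordinary tensor product $R'^{\otimes_R (n+1)}$. The key preliminary step is to check that this cosimplicial diagram, viewed in $\mathrm{CAlg}(D(k))$ via the inclusion from \cref{not} (8), coincides with the derived Čech nerve: because $R \to R'$ is flat, the classical tensor products $R'^{\otimes_R (n+1)}$ agree with the derived tensor products. Thus no distinction between animated and ordinary tensor products arises along this diagram.

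Next I would invoke classical faithfully flat descent (Amitsur / Grothendieck), which states that $R \to \lim_{[n] \in \Delta} R'^{\otimes_R (n+1)}$ is an equivalence already at the level of $D(k)$, hence also in $\mathrm{CAlg}(D(k))$ since limits there are computed on underlying objects in $D(k)$. Applying the right adjoint $\mathrm{dR}^{\vee}$ — which preserves all small limits by virtue of being a right adjoint — yields
\[
\mathrm{dR}^{\vee}(R) \;\simeq\; \lim_{[n] \in \Delta} \mathrm{dR}^{\vee}\bigl(R'^{\otimes_R (n+1)}\bigr)
\]
in $\mathrm{ARings}_k$. Restricting along $\mathrm{Alg}_k \hookrightarrow \mathrm{CAlg}(D(k))$ identifies this with the desired sheaf condition for $\mathrm{dR}^{\vee}_{\circ}$.

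There is no real obstacle: the only substantive input is classical fpqc descent in $D(k)$, combined with the flatness-based identification of ordinary and derived tensor products along $R \to R'$. The rest is a formal consequence of $\mathrm{dR}^{\vee}$ being a right adjoint. If one wanted to avoid appealing to descent in $\mathrm{CAlg}(D(k))$ directly, one could work entirely in $D(k)$ at the level of mapping spaces out of $\mathrm{dR}(k[x])$ (using the corepresentability observation in \cref{not} (7)), but the argument is essentially the same.
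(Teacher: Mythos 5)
Your proof is correct and takes essentially the same route as the paper's: both reduce the sheaf condition to classical faithfully flat descent for the Čech nerve viewed in $\mathrm{CAlg}(D(k))$, and then conclude by limit-preservation of $\mathrm{dR}^{\vee}$ (equivalently, by corepresentability of the underlying space-valued functor by $\mathrm{dR}(k[x])$, which is the form the paper uses). You merely spell out the details — the identification of ordinary with derived tensor products along a flat map, and the totalization statement — that the paper leaves implicit.
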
{}

\begin{proof} It is enough to check that the composite functor of $\mathrm{dR}^{\vee}_{\circ}$ along $\mathrm{ARings}_k \to \mathcal{S},$ which gives a functor $\mathrm{Alg}_k \to \mathcal{S}$ is a sheaf of spaces. But that functor sends $B \to \mathrm{Maps}_{\mathrm{CAlg}(D(k))}(\mathrm{dR}(k[x]), B),$ (see \cref{not}~(7)) and thus the claim follows by classical faithfully flat descent. 
\end{proof}{}

This constructs a map $\mathrm{Cone}(\mathbb{G}_a^\sharp \to \mathbb{G}_a) \to \mathrm{dR}^{\vee}_{\circ}$ in $\mathrm{Fun}(\mathrm{Alg}_k, \mathrm{ARings}_k).$ We needed the above lemma because formation of the cone on the LHS involves sheafification.\vspace{2mm}

Having constructed a map of ring stacks, now we need to check that they are isomorphic. This can be done at the level of stacks by forgetting the ring structure, i.e., we can check that $\mathrm{Cone}(\mathbb{G}_a^\sharp \to \mathbb{G}_a) \to \mathrm{dR}^{\vee}_{\circ}$ is an equivalence in $\mathrm{Fun}(\mathrm{Alg}_k, \mathcal{S}),$ after using the functor $\mathrm{ARings}_k \to \mathcal{S},$ where the latter denotes the $\infty$-category of spaces. This will rely on a Tannakian reconstruction result for $\mathcal{Y}:=\mathrm{Cone}(\mathbb{G}_a^\sharp \to \mathbb{G}_a).$ 

\begin{lemma}\label{tan} Let $B$ be a $k$-algebra. Then the groupoid of $\mathrm{Spec}\, B$ valued points of $\mathcal{Y},$ denoted as $\mathcal{Y}(B) \simeq \mathrm{Maps}_{\mathrm{CAlg}(D(k))}(R\Gamma(\mathcal{Y}, \mathscr O), B).$ 
\end{lemma}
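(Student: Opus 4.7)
My plan is to prove the Tannakian reconstruction for $\mathcal{Y}$ by presenting it as a simplicial colimit of affines via the Čech nerve of the evident atlas, computing $R\Gamma(\mathcal{Y},\mathscr{O})$ as the dual totalization, and then comparing the two sides explicitly. The cone description $\mathcal{Y} = \mathrm{Cone}(\mathbb{G}_a^\sharp \to \mathbb{G}_a)$ exhibits $\mathcal{Y}$ as the fppf-sheafified quotient $[\mathbb{G}_a/\mathbb{G}_a^\sharp]$ for the translation action, so the map $\pi\colon \mathbb{G}_a \to \mathcal{Y}$ is an fpqc atlas whose Čech nerve $C^\bullet$ is given by $C^n = \mathbb{G}_a \times (\mathbb{G}_a^\sharp)^n$. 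The corresponding cosimplicial coordinate algebra is $A^\bullet \in \mathrm{CAlg}(D(k))^{\Delta}$ with $A^n = k[x] \otimes_k k\langle y_1,\ldots,y_n\rangle$, where $k\langle y\rangle$ denotes the PD-polynomial ring; the coface and codegeneracy maps encode the $\mathbb{G}_a^\sharp$-action and the group multiplication.

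By flat descent for quasi-coherent sheaves along $\pi$ one obtains $R\Gamma(\mathcal{Y},\mathscr{O}) \simeq \mathrm{Tot}(A^\bullet)$ in $\mathrm{CAlg}(D(k))$, and by the very definition of $\mathcal{Y}$ as a sheafified coequalizer one has $\mathcal{Y}(B) \simeq |C^\bullet(B)|_{\mathrm{fppf}}$. Since each $A^n$ is a discrete polynomial algebra, $\mathrm{Maps}_{\mathrm{CAlg}(D(k))}(A^n, B) = C^n(B)$. The universal property of $\mathrm{Tot}(A^\bullet)$ as a limit then produces a natural comparison map
\[
\mathcal{Y}(B) \;\longrightarrow\; \mathrm{Maps}_{\mathrm{CAlg}(D(k))}(\mathrm{Tot}(A^\bullet), B),
\]
and the lemma asserts that this map is an equivalence.

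The main obstacle is verifying this last equivalence, since the functor $\mathrm{Maps}(-,B)$ does not in general convert a cosimplicial limit into the geometric realization of pointwise mapping spaces; this is precisely the Tannakian content of the statement, and where I would invoke the general reconstruction framework alluded to in \cref{easthall}. The strategy I would run is a descent argument: both sides are fppf sheaves of anima in $B$—the LHS by the construction of $\mathcal{Y}$, and the RHS by the same reasoning as \cref{easy}—so it suffices to verify the equivalence after passing to an fppf cover of $\mathrm{Spec}\,B$ over which the ambient $\mathbb{G}_a^\sharp$-torsor trivializes. On such a cover the simplicial space $C^\bullet(B)$ splits compatibly with the group action, and the cosimplicial algebra $A^\bullet$ splits as a Hopf-algebraic cobar construction for $k\langle y\rangle$; matching the two sides then reduces to the ind-finite coalgebraic structure of $k\langle y\rangle$, from which the comparison follows by a direct computation. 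I expect this descent-and-splitting step to be the technically substantive part of the argument.
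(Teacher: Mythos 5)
There is a genuine gap. Your setup is fine and consistent with what the paper does elsewhere: presenting $\mathcal{Y}$ as $[\mathbb{G}_a/\mathbb{G}_a^\sharp]$, computing $R\Gamma(\mathcal{Y},\mathscr O)\simeq \mathrm{Tot}(A^\bullet)$ by descent along $\mathbb{G}_a\to\mathcal{Y}$, and writing down the canonical comparison map $|C^\bullet(B)|\to \mathrm{Maps}_{\mathrm{CAlg}(D(k))}(\mathrm{Tot}(A^\bullet),B)$ are all legitimate. But the step you defer --- that this map is an equivalence --- \emph{is} the lemma, and the strategy you sketch for it does not go through as stated. The proposed reduction ``pass to an fppf cover over which the ambient $\mathbb{G}_a^\sharp$-torsor trivializes'' is circular: a point of the right-hand side is just a map of $E_\infty$-algebras $\mathrm{dR}(k[x])\to B$, and it carries no torsor until you have already shown it arises from a point of $\mathcal{Y}$. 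What local surjectivity actually requires is that any such $E_\infty$-map factors, fppf-locally on $B$, through the unit-graded piece $k[x]$ compatibly with the cosimplicial structure; nothing in the proposal produces such a factorization, and the concluding appeal to ``the ind-finite coalgebraic structure of $k\langle y\rangle$'' and ``a direct computation'' is not an argument. Note also that the analogous cosimplicial statement can fail for other quotient stacks (compare \cref{easthall}), so some special feature of $\mathcal{Y}$ must enter; your sketch never identifies which one.

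The paper takes a different and complete route: it invokes Lurie's Tannaka duality to identify $\mathcal{Y}(B)$ with a category of symmetric monoidal functors $D_{\mathrm{qc}}(\mathcal{Y})\to \mathrm{LMod}_B$, and then uses the special features of $\mathcal{Y}$ --- cohomological dimension $1$ of $B\mathbb{G}_a^\sharp$ (\cref{one}, \cref{dim}) and unipotence of $\mathbb{G}_a^\sharp$ --- to prove that $\mathscr O$ is a compact generator, whence $D_{\mathrm{qc}}(\mathcal{Y})\simeq \mathrm{LMod}_{R\Gamma(\mathcal{Y},\mathscr O)}$ (\cref{what}) and the connectivity condition is automatic (\cref{connective}). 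These inputs are exactly what substitutes for the ``technically substantive part'' you leave open. If you want to pursue a cosimplicial/cobar argument instead, you would essentially be reproving an affineness statement in the style of To\"en for the unipotent group $\mathbb{G}_a^\sharp$, and you would still need to address the mismatch between $E_\infty$-algebras and cosimplicial commutative rings in characteristic $p$; as written, the proposal does not establish the lemma.
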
{}

We will give a proof of the above lemma in the next section. Granting this proof, we note that $R\Gamma(Y,\mathscr O) \simeq \mathrm{dR}(k[x]).$ To see this, we can compute the LHS by faithfully flat descent along $\mathbb{G}_a \to \mathcal{Y}$ and the RHS via the Cech-Alexander complex. This finishes the proof (\textit{c.f.} proof of \cref{easy}) of \cref{main} since we obtain $\mathcal{Y}(B) \simeq \mathrm{dR}^\vee_{\circ} (B).$
\end{proof}

\section{Tannakian reconstruction}
Throughout this section let $\mathcal{Y}:=\mathrm{Cone}(\mathbb{G}_a^\sharp \to \mathbb{G}_a).$ There is a natural map $\mathcal{Y} \to B\mathbb{G}_a^{\sharp}$ whose fiber is $\mathbb{G}_a.$ Since $\mathbb{G}_a$ is an affine scheme and $* \to B\mathbb{G}_a^{\sharp}$ is faithfully flat it follows that the map $\mathcal{Y} \to B\mathbb{G}_a^{\sharp}$ is an affine morphism of stacks.

\begin{lemma}\label{one}
The stack $B\mathbb{G}_a^\sharp$ has cohomological dimension $1.$
\end{lemma}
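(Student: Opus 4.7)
The plan is to identify quasi-coherent sheaves on $B\mathbb{G}_a^\sharp$ with comodules over the Hopf algebra $H := \mathscr{O}(\mathbb{G}_a^\sharp) = \Gamma_k[x]$ (the divided power polynomial algebra $\bigoplus_{n \geq 0} k\cdot x^{[n]}$), under which $R\Gamma(B\mathbb{G}_a^\sharp, \mathcal{F}) \simeq R\mathrm{Hom}_{H\text{-comod}}(k, \mathcal{F})$. The cohomological dimension bound then becomes the statement that the abelian category of $H$-comodules has global dimension at most one.

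The main step will be Cartier duality. Since $H$ is nonnegatively graded with finite-dimensional pieces, its continuous linear dual is the complete topological Hopf algebra $k[[t]]$, where $t$ is dual to $x^{[1]}$ and the multiplication on $k[[t]]$ is dual to the coaddition on $H$. Under this duality, $H$-comodules correspond to those $k[[t]]$-modules on which $t$ acts locally nilpotently, and the trivial comodule $k$ corresponds to the residue field $k[[t]]/(t)$. Now $k[[t]]$ is a discrete valuation ring, hence has global dimension one, with the Koszul resolution $0 \to k[[t]] \xrightarrow{t} k[[t]] \to k \to 0$ of the residue field; and since the injective hull in $k[[t]]$-modules of a locally-$t$-nilpotent module is a sum of copies of $k[t^{-1}]$ (which is itself locally nilpotent), this bound descends to the subcategory of locally-nilpotent modules, and therefore via Cartier duality to $H$-comodules. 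One concludes $\mathrm{Ext}^i_{H\text{-comod}}(k, \mathcal{F}) = 0$ for every $i \geq 2$, establishing the lemma.

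The main obstacle is the careful bookkeeping of Cartier duality in this mildly infinite-dimensional setting — matching injective objects on the two sides and checking that Ext groups are computed the same way — but this is formal from the explicit graded structure of $H$. If one wishes to avoid Cartier duality, a more hands-on route is to exhibit directly a length-one injective resolution of the trivial comodule $0 \to k \to H \to \bar H \to 0$, where $\bar H := H/k$: the shift map $\bar x^{[n]} \leftrightarrow x^{[n-1]}$ is readily checked to be an isomorphism of comodules $\bar H \simeq H$, so $\bar H$ is cofree (hence injective) and the resolution has length one; extending to arbitrary $\mathcal{F}$ via the standard cobar construction then yields the claim in full.
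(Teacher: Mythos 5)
Your proof is correct and follows essentially the same route as the paper: Cartier duality identifying representations of $\mathbb{G}_a^\sharp$ with locally nilpotent (torsion) $k[[t]]$-modules, followed by the length-one resolution $0 \to k[[t]] \xrightarrow{t} k[[t]] \to k \to 0$ to kill $\mathrm{Ext}^i$ for $i \geq 2$. Your added care in checking that the Ext groups in the locally nilpotent subcategory agree with those in all $k[[t]]$-modules (via Matlis injective hulls), which the paper's proof glosses over, is a welcome refinement, as is the alternative explicit injective resolution of the trivial comodule.
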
{}
\begin{proof}
By a spectral sequence argument, it is enough to prove that if $F$ is a quasi-coherent sheaf on $B\mathbb{G}_a^{\sharp}$ then $H^i(B\mathbb G_a^\sharp, F)=0$ for $i>0.$ Since the dual ${\mathbb{G}_a^{\sharp}}^*$ of the group scheme $\mathbb{G}_a^{\sharp}$ is isomorphic to the formal group law $\widehat{\mathbb G}_a,$ such an $F$ corresponds to a nilpotent $k[[T]]$-module $V$ whose cohomology is computed by $\mathrm{Ext}^i _{k[[T]]}(k, V).$ We refer to \cite[Section 37.3.12]{HZW} for more details on such duality. The claim now follows from standard resolution of $k$ by free modules over $k[[T]]$ as that yields $\mathrm{Ext}^i _{k[[T]]}(k, V)=0$ for $i>1.$
\end{proof}{}

\begin{corollary}\label{dim}
The stack $\mathcal{Y}$ has cohomological diemnsion $1.$
\end{corollary}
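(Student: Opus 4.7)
The plan is to leverage the affine morphism $\pi:\mathcal{Y}\to B\mathbb{G}_a^{\sharp}$ that was identified in the paragraph preceding \cref{one}, and combine it with \cref{one} via a Leray-type argument. Since the statement to be proved is an immediate consequence of this setup, the work is mostly bookkeeping.

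First I would recall that because $\pi:\mathcal{Y}\to B\mathbb{G}_a^{\sharp}$ is an affine morphism of stacks, the pushforward $\pi_{*}$ is exact on quasi-coherent sheaves; equivalently, the derived pushforward $R\pi_{*}$ is $t$-exact on $D_{\mathrm{qc}}(\mathcal{Y})$ and coincides with $\pi_{*}$ on the heart. This is the standard behavior of affine morphisms: locally on $B\mathbb{G}_a^{\sharp}$, the map looks like the structure morphism of an affine scheme, for which global sections of a quasi-coherent sheaf have no higher cohomology.

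Next, for any quasi-coherent sheaf $F$ on $\mathcal{Y}$, I would apply the composition-of-derived-functors identity
\[
R\Gamma(\mathcal{Y}, F)\;\simeq\;R\Gamma(B\mathbb{G}_a^{\sharp},\, R\pi_{*}F)\;\simeq\;R\Gamma(B\mathbb{G}_a^{\sharp},\, \pi_{*}F),
\]
using $t$-exactness of $R\pi_{*}$ in the second isomorphism. Invoking \cref{one}, the right-hand side is concentrated in cohomological degrees $[0,1]$, so $H^{i}(\mathcal{Y}, F)=0$ for $i>1$. This gives the upper bound on the cohomological dimension. To verify that the dimension is exactly $1$ rather than $0$, I would pull back along $\pi$ a quasi-coherent sheaf on $B\mathbb{G}_a^{\sharp}$ with non-vanishing $H^{1}$ (which exists by the proof of \cref{one}, since there are nontrivial $\mathrm{Ext}^{1}_{k[[T]]}(k,V)$ groups), and observe that the same spectral sequence displays the non-vanishing on $\mathcal{Y}$ as well.

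There is no real obstacle here: the only point requiring care is the $t$-exactness of $\pi_{*}$ for affine morphisms of (possibly non-classical) stacks, but this is standard and was essentially built into the setup at the start of the section. The substantive content of the corollary is entirely packaged into \cref{one}.
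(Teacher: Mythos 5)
Your proposal is correct and follows exactly the route the paper intends: the paper's one-line proof (``follows from the above lemma since $\mathcal{Y}\to B\mathbb{G}_a^{\sharp}$ is affine'') is precisely the argument you spell out, namely that $\pi_{*}$ is $t$-exact for an affine morphism, so $R\Gamma(\mathcal{Y},F)\simeq R\Gamma(B\mathbb{G}_a^{\sharp},\pi_{*}F)$ and \cref{one} applies. The extra verification that the dimension is exactly $1$ is harmless but not needed, since the corollary is only ever used as an upper bound.
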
{}

\begin{proof}
Follows from the above lemma since $\mathcal{Y} \to B\mathbb{G}_a^\sharp$ is affine.
\end{proof}{}

\begin{lemma}
The structure sheaf $\mathscr O$ is a compact generator for the derived $\infty$-category of quasi-coherent sheaves on $B\mathbb{G}_a^\sharp,$ denoted as $D_{\mathrm{qc}}(B \mathbb{G}_a^\sharp).$
\end{lemma}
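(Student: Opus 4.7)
The plan is to pass through the Cartier-duality identification already implicit in the proof of \cref{one}, which identifies quasi-coherent sheaves on $B\mathbb{G}_a^\sharp$ with nilpotent $k[[T]]$-modules $\mathrm{Nilp}(k[[T]])$; under this identification the structure sheaf $\mathscr{O}$ corresponds to the residue field $k$. The claim then reduces to checking that $k$ is compact and a generator in the derived $\infty$-category of $\mathrm{Nilp}(k[[T]])$, or equivalently directly in $D_{\mathrm{qc}}(B\mathbb{G}_a^\sharp)$.

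For compactness, I would show $R\Gamma(B\mathbb{G}_a^\sharp,-) \simeq R\mathrm{Hom}(\mathscr{O},-)$ preserves filtered colimits. Under the identification this is the functor $V \mapsto R\mathrm{Hom}_{k[[T]]}(k,V)$, which the length-one projective resolution $0 \to k[[T]] \xrightarrow{T} k[[T]] \to k \to 0$ realizes as the fiber of $V \xrightarrow{T} V$. Fibers of this shape manifestly commute with filtered colimits, and the resolution also immediately reproves the cohomological dimension bound from \cref{one}.

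For generation, suppose $F \in D_{\mathrm{qc}}(B\mathbb{G}_a^\sharp)$ satisfies $\mathrm{Map}(\mathscr{O}[n], F) \simeq 0$ for all $n \in \mathbb{Z}$, equivalently $R\Gamma(B\mathbb{G}_a^\sharp, F) \simeq 0$. Writing $V_i := \mathcal{H}^i(F)$, the coh.~dim.~$1$ bound collapses the hypercohomology spectral sequence into short exact sequences of the form $0 \to V_{n-1}/TV_{n-1} \to H^n(B\mathbb{G}_a^\sharp, F) \to V_n[T] \to 0$, so vanishing of every $H^n$ forces $V_n[T] = 0$ for all $n$. Since any nonzero nilpotent $k[[T]]$-module must contain nonzero $T$-torsion -- if $v \neq 0$ and $m$ is minimal with $T^m v = 0$, then $T^{m-1} v$ is a nonzero element of $V_n[T]$ -- this gives $V_n = 0$ for every $n$, and hence $F \simeq 0$.

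The main obstacle I anticipate is making the derived identification $D_{\mathrm{qc}}(B\mathbb{G}_a^\sharp) \simeq D(\mathrm{Nilp}(k[[T]]))$ rigorous enough to run the argument on unbounded complexes, since $\mathrm{Nilp}(k[[T]])$ is only a Serre subcategory of $\mathrm{Mod}_{k[[T]]}$ and Ext groups computed in the subcategory must be compared with the ambient ones. A cleaner fallback that avoids this is to work directly with the affine cover $\ast \to B\mathbb{G}_a^\sharp$: faithfully flat descent identifies $D_{\mathrm{qc}}(B\mathbb{G}_a^\sharp)$ with comodules over the divided-power Hopf algebra of $\mathbb{G}_a^\sharp$, and the two-step cobar resolution of $k$ handles compactness directly, while the fact that pullback to the affine cover is conservative (combined with the Ext vanishing on $V_n[T]$) handles generation without invoking the duality at all.
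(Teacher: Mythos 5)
Your proof is correct and follows essentially the same route as the paper: compactness from the cohomological-dimension-one bound (which the paper's Lemma 3.1 also establishes via the two-term $k[[T]]$-resolution you write down), and generation from the collapsed hypercohomology spectral sequence plus the fact that a nonzero nilpotent $k[[T]]$-module has nonzero $T$-torsion, which is exactly the paper's ``a nonzero unipotent representation has a fixed vector'' step seen through Cartier duality. Your explicit identification of $R\Gamma$ with $\mathrm{fib}(V \xrightarrow{T} V)$ and your flagged fallback via the cobar resolution are just more detailed versions of the same argument.
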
{}

\begin{proof}
The structure sheaf $\mathscr O$ is compact since $B \mathbb{G}_a^\sharp$ has finite cohomological dimension. Proving that it is a generator amounts to showing that if $R \mathrm{Hom}(\mathscr O, F)=0$ for some $F \in D_{\mathrm{qc}}(B \mathbb{G}_a^\sharp),$ then $F = 0.$ In other words, if $R\Gamma (F)=0,$ then we need to show that $F=0.$ By the hypercohomology spectral sequence and \cref{one}, we get that $H^i (B \mathbb{G}_a^\sharp, \mathcal{H}^j F)=0$ for all $j$ and all $i\ge 0.$ In particular, $H^0 (B \mathbb{G}_a^\sharp, \mathcal{H}^j F)=0.$ But since $\mathbb{G}_a^\sharp$ is a unipotent group scheme, a non-trivial representation must have a fixed vector. This implies that $\mathcal{H}^j F=0$ for all $j$ and therefore $F =0,$ as desired.
\end{proof}{}

\begin{corollary}\label{com}
The structure sheaf $\mathscr O$ is a compact generator for $D_{\mathrm{qc}} (\mathcal{Y}).$
\end{corollary}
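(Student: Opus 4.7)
My plan is to deduce the corollary directly from the previous lemma via the affine morphism $f: \mathcal{Y} \to B\mathbb{G}_a^\sharp$ noted at the start of the section, using that affine pushforward is both compatible with global sections and conservative.

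Compactness of $\mathscr O_\mathcal{Y}$ will follow by the same reasoning used for $B\mathbb{G}_a^\sharp$ in the previous lemma: by \cref{dim}, the stack $\mathcal{Y}$ has finite cohomological dimension, so the functor $R\Gamma(\mathcal{Y}, -) \simeq R\mathrm{Hom}_\mathcal{Y}(\mathscr O, -)$ commutes with filtered colimits on $D_{\mathrm{qc}}(\mathcal{Y})$, and hence $\mathscr O$ is compact.

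For generation, the key input is that for an affine morphism, the derived pushforward $Rf_* : D_{\mathrm{qc}}(\mathcal{Y}) \to D_{\mathrm{qc}}(B\mathbb{G}_a^\sharp)$ is $t$-exact and conservative; this can be verified after faithfully flat base change along the atlas $* \to B\mathbb{G}_a^\sharp$, where $\mathcal{Y}$ pulls back to the affine scheme $\mathbb{G}_a = \mathrm{Spec}\, k[x]$ and the statement reduces to the classical one. Granting this, suppose $F \in D_{\mathrm{qc}}(\mathcal{Y})$ satisfies $R\mathrm{Hom}(\mathscr O_\mathcal{Y}, F) = 0$, i.e., $R\Gamma(\mathcal{Y}, F) = 0$. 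Using the identification $R\Gamma(\mathcal{Y}, F) \simeq R\Gamma(B\mathbb{G}_a^\sharp, Rf_* F) \simeq R\mathrm{Hom}(\mathscr O_{B\mathbb{G}_a^\sharp}, Rf_* F)$ and applying the previous lemma to $Rf_* F$ forces $Rf_* F = 0$, and conservatism of $Rf_*$ then yields $F = 0$.

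The main obstacle is simply verifying the relevant properties of $Rf_*$ for the affine morphism $f$ — namely, conservatism together with the factorization of $R\Gamma(\mathcal{Y}, -)$ through it — but both points follow from standard descent along the flat cover $* \to B\mathbb{G}_a^\sharp$. Thus the corollary is essentially a formal consequence of the previous lemma together with \cref{dim}.
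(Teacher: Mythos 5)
Your proof is correct and fills in exactly what the paper's one-line proof intends: compactness from the finite cohomological dimension of $\mathcal{Y}$ (\cref{dim}), and generation by pushing forward along the affine morphism $f:\mathcal{Y}\to B\mathbb{G}_a^\sharp$, using conservativity of $Rf_*$ (checked by flat base change to the atlas) together with the previous lemma. This is essentially the same approach as the paper, just spelled out in more detail.
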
{}

\begin{proof}
Follows in a way similar to the proof above along with the fact that $\mathcal{Y} \to B \mathbb G_a^\sharp$ is affine.
\end{proof}{}

\begin{corollary}\label{what}
There is an equivalence of symmetric monoidal stable $\infty$-categories $D_{\mathrm{qc}}(\mathcal{Y}) \simeq \mathrm{LMod}_{R\Gamma(\mathcal{Y}, \mathscr O)}.$
\end{corollary}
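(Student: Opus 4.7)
The plan is to run the Schwede--Shipley / Lurie recognition theorem for module categories, using the input provided by \cref{com}. Since $\mathcal{Y}$ is a stack, the $\infty$-category $D_{\mathrm{qc}}(\mathcal{Y})$ is presentable and stable, and by \cref{com} the structure sheaf $\mathscr O$ is a compact generator. Applying \cite[Theorem 7.1.2.1]{Lu} (or its stable version, \cite[Theorem 4.8.5.8]{Lu}, formulated for $\mathcal{O}$ as the unit) then produces an equivalence of stable $\infty$-categories
$$ D_{\mathrm{qc}}(\mathcal{Y}) \simeq \mathrm{LMod}_{\mathrm{End}(\mathscr O)}, $$
implemented by the global sections functor $F \mapsto R\mathrm{Hom}_{\mathcal{Y}}(\mathscr O, F) \simeq R\Gamma(\mathcal{Y}, F)$, where $\mathrm{End}(\mathscr O)$ is the endomorphism $E_1$-algebra of $\mathscr O$.

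The next step is to identify $\mathrm{End}(\mathscr O)$ with $R\Gamma(\mathcal{Y}, \mathscr O)$ as an $E_\infty$-algebra. Because $D_{\mathrm{qc}}(\mathcal{Y})$ carries a symmetric monoidal structure in which $\mathscr O$ is the unit object, the endomorphism object $\mathrm{End}(\mathscr O) \simeq R\mathrm{Hom}(\mathscr O, \mathscr O) \simeq R\Gamma(\mathcal{Y}, \mathscr O)$ acquires a canonical $E_\infty$-refinement, and this refinement agrees with the natural $E_\infty$-algebra structure on $R\Gamma(\mathcal{Y}, \mathscr O) \in \mathrm{CAlg}(D(k))$ coming from the commutative ring scheme structure on the structure sheaf. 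Thus the underlying stable equivalence already has both sides making sense as modules over an $E_\infty$-algebra.

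To upgrade the stable equivalence to one of symmetric monoidal $\infty$-categories, I would invoke the symmetric monoidal version of the recognition theorem: when the compact generator is the unit $\mathbf{1}$, the global sections functor $R\Gamma(\mathcal{Y}, -)$ is lax symmetric monoidal, and promoting to modules over $\mathrm{End}(\mathbf{1})$ makes it symmetric monoidal. This is the content of \cite[Proposition 4.8.5.8 and Theorem 7.1.2.1]{Lu} read together, or equivalently a direct application of Lurie's Barr--Beck for presentably symmetric monoidal stable $\infty$-categories with a compact generator equal to the unit.

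The main obstacle is really just pinning down the symmetric monoidal upgrade; the plain $\infty$-categorical equivalence is a mechanical consequence of \cref{com} once one has the generator-and-compactness statement in hand. The symmetric monoidal enhancement is not automatic from generation alone, and crucially uses that $\mathscr O$ is the \emph{unit} of the tensor structure, which ensures that global sections is lax symmetric monoidal and that the induced functor factors through $\mathrm{LMod}_{R\Gamma(\mathcal{Y}, \mathscr O)}$ symmetric monoidally.
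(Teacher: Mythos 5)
Your proposal is correct and follows essentially the same route as the paper: both deduce the equivalence from \cref{com} via Lurie's recognition theorem for module categories (Schwede--Shipley), and both obtain the symmetric monoidal enhancement from the fact that the compact generator $\mathscr O$ is the unit of a presentably symmetric monoidal structure in which the tensor product commutes with colimits. Your writeup is merely more explicit about identifying $\mathrm{End}(\mathscr O)$ with $R\Gamma(\mathcal{Y},\mathscr O)$ and about the lax symmetric monoidal structure on global sections, which the paper compresses into a parenthetical.
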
{}

\begin{proof}The equivalence follows from \cref{com} (which is compatible with the symmetric monoidal structures since $(\cdot) \otimes (\cdot)$ preserves colimits in both variables) by using \cite[Thm 7.1.2.1]{Lur3}. \end{proof}{}

\begin{lemma}\label{connective}
The connective objects with respect to the standard $t$-structure on $D_{\mathrm{qc}}(\mathcal{Y})$ are generated under colimits by the structure sheaf $\mathscr O.$
\end{lemma}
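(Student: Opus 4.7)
The plan is to prove the lemma in two steps. Let $\mathcal{C}\subseteq D_{\mathrm{qc}}(\mathcal{Y})^{\le 0}$ denote the smallest full subcategory containing $\mathscr{O}$ and closed under colimits. First, I will show the abelian heart $\mathcal{A}:=D_{\mathrm{qc}}(\mathcal{Y})^{\heartsuit}$ is contained in $\mathcal{C}$; then I will show every object of $D_{\mathrm{qc}}(\mathcal{Y})^{\le 0}$ is a geometric realization of a simplicial object in $\mathcal{A}$, which combined with the first step gives the claim.

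For the first step, let $V\in\mathcal{A}$ be nonzero. Pulling back along the affine fpqc morphism $\mathbb{G}_a\to\mathcal{Y}$ coming from the identification $\mathcal{Y}=\mathbb{G}_a/\mathbb{G}_a^\sharp$, $V$ corresponds to a nonzero $\mathbb{G}_a^\sharp$-equivariant $k[x]$-module $M$; then $\mathrm{Hom}_{\mathcal{A}}(\mathscr{O},V)=M^{\mathbb{G}_a^\sharp}$ is nonzero by the unipotence of $\mathbb{G}_a^\sharp$---precisely the input invoked in the proof of \cref{com}. Thus $\mathscr{O}$ is a generator of the Grothendieck abelian category $\mathcal{A}$, so every $V\in\mathcal{A}$ admits a resolution $\cdots\to P_1\to P_0\to V\to 0$ by coproducts of copies of $\mathscr{O}$. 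Via Dold--Kan this resolution becomes a simplicial object with terms in $\mathcal{C}$, whose geometric realization in $D_{\mathrm{qc}}(\mathcal{Y})^{\le 0}$ is $V$, placing $V\in\mathcal{C}$.

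For the second step, I would invoke the Dold--Kan equivalence $D^{\le 0}(\mathcal{A})\simeq\mathrm{Anim}(\mathcal{A})$ for the Grothendieck abelian heart $\mathcal{A}$, together with the identification $D_{\mathrm{qc}}(\mathcal{Y})^{\le 0}\simeq D^{\le 0}(\mathcal{A})$ (valid since $\mathcal{Y}$ is affine over the quasi-compact stack $B\mathbb{G}_a^\sharp$ appearing in \cref{one}). Combining these, every $F\in D_{\mathrm{qc}}(\mathcal{Y})^{\le 0}$ is a geometric realization of a simplicial object in $\mathcal{A}$; each term lies in $\mathcal{C}$ by the first step and $\mathcal{C}$ is closed under geometric realizations, so $F\in\mathcal{C}$.

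The main obstacle I anticipate is justifying the second step: both the animated/derived equivalence for Grothendieck abelian hearts and the comparison $D_{\mathrm{qc}}(\mathcal{Y})^{\le 0}\simeq D^{\le 0}(\mathcal{A})$ are standard, but require invoking nontrivial pieces of the theory of presentable $\infty$-categories. An alternative route I would also consider is via the adjoint functor theorem: the inclusion $i\colon\mathcal{C}\hookrightarrow D_{\mathrm{qc}}(\mathcal{Y})^{\le 0}$ admits a right adjoint $i^R$, and one tries to show the cofiber of the counit $i(i^R F)\to F$ vanishes by combining adjunction-forced vanishing of $\mathrm{Maps}_{D^{\le 0}}(\mathscr{O}[n], C(F))$ for $n\ge 0$ with the cohomological-dimension bound of \cref{dim} and a hypercohomology spectral sequence whose key input is again the unipotency of $\mathbb{G}_a^\sharp$ from the proof of \cref{com}.
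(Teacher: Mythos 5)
The decisive step of your argument---that every object $V$ of the heart $\mathcal{A}$ admits a resolution by coproducts of copies of $\mathscr{O}$---is not justified, and this is exactly where the content of the lemma sits. You deduce that $\mathscr{O}$ is a generator of $\mathcal{A}$ from the fact that $\mathrm{Hom}_{\mathcal{A}}(\mathscr{O},V)=M^{\mathbb{G}_a^\sharp}\neq 0$ for every nonzero $V$. But ``$\mathrm{Hom}(G,V)\neq 0$ for all $V\neq 0$'' is strictly weaker than $G$ being a generator (i.e.\ than $\mathrm{Hom}(G,-)$ being faithful, equivalently every object being a quotient of a coproduct of copies of $G$), and the gap between the two conditions is precisely the unipotence phenomenon you invoke. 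Indeed, on $B\mathbb{G}_a^\sharp$ itself the structure sheaf satisfies $\mathrm{Hom}(\mathscr{O},V)=V^{\mathbb{G}_a^\sharp}\neq 0$ for every nonzero $V$, yet it is \emph{not} a generator of $\mathrm{Rep}(\mathbb{G}_a^\sharp)$: any map from a coproduct of copies of the trivial representation into $V$ has image a trivial subrepresentation, hence lands inside $V^{\mathbb{G}_a^\sharp}$, so a nonsplit extension of $k$ by $k$ (which exists since $H^1(B\mathbb{G}_a^\sharp,\mathscr{O})=\mathrm{Ext}^1_{k[[T]]}(k,k)\neq 0$) is not a quotient of a coproduct of copies of $\mathscr{O}$. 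For $\mathcal{Y}=\mathbb{G}_a/\mathbb{G}_a^\sharp$ the assertion you actually need is that every $\mathbb{G}_a^\sharp$-equivariant $k[x]$-module is generated over $k[x]$ by its invariants; this may well hold, but it is a genuine statement in which the $k[x]$-module structure must enter essentially (since the analogue fails for $B\mathbb{G}_a^\sharp$), and it does not follow from unipotence alone. As written, Step 1 is a non sequitur, and Step 2 (which is fine modulo the standard inputs you flag) cannot compensate for it.

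For comparison, the paper's proof does not pass through the heart at all: it reduces the lemma to the claim that any $F$ with $H^i(\mathcal{Y},F)=0$ for $i<0$ must be coconnective, and proves that claim using the descent spectral sequence, the cohomological dimension bound of \cref{dim}, and the fixed-vector property from the proof of \cref{com}. This is essentially the ``alternative route'' you sketch in your final paragraph; developing that route (being careful about whether one closes under colimits only or under colimits and extensions, which is where the orthogonality bookkeeping lives) is the way to a complete argument. If instead you want to keep your heart-first strategy, you must supply an actual proof that $\mathscr{O}_{\mathcal{Y}}$ generates $\mathcal{A}$ in the strong sense.
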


\begin{proof}
Since $\mathscr O$ is connective, it follows that objects generated under colimits by $\mathscr O$ are all connective. Thus it would be enough to prove that if $F \in D_{\mathrm{qc}}(\mathcal{Y})$ is such that $H^i (B \mathbb{G}_a^\sharp, F)=0$ for $i <0,$ then $F$ must be coconnective, i.e., $\mathcal{H}^j (F)=0$ for $j<0.$ This again follows from the hypercohomology spectral sequence, \cref{dim} and the fact that a nonzero representation of $\mathbb G_a^\sharp$ must have a fixed vector, by unipotence.
\end{proof}{}

\begin{proof}[Proof of \cref{tan}]
By the Tannaka duality theorem \cite[Thm~ 9.2.0.2]{Lur3}, we know that $\mathrm{Maps}(\w{Spec}\, B, \mathcal{Y})$ can be identified with the full subcategory of $\mathrm{Fun}^{\otimes} (D_{\w{qc}}(\mathcal{Y}), D_{\mathrm{qc}}(\w{Spec}\, B) )$ spanned by those symmetric monoidal functors $F$ that has a right adjoint $G,$ such that $G$ preserves colimits; $F$ preserves connective objects, and $F$ and $G$ satisfies a projection formula, i.e., $u \otimes G(v) \simeq G(F(u)\otimes v).$ Note that \cref{connective} implies that $F$ preserving connective objects is implied by the necessary condition that $F (\mathscr{O}_{\mathcal{Y}}) \simeq \mathscr{O}_{\w{Spec}\, B}.$ Now we note that $D_\w{qc}(\w{Spec}\, B) \simeq \mathrm{LMod}_B$, and $D_\mathrm{qc}(\mathcal{Y}) \simeq \mathrm{LMod}_{R\Gamma(\mathcal{Y}, \mathscr O)}$ (by \cref{what}). Therefore the full subcategory of such functors in $\mathrm{Fun}^{\otimes} (D_{\w{qc}}(\mathcal{Y}), D_{\mathrm{qc}}(\w{Spec}\, B ))$ also corresponds to $\mathrm{Maps}_{\mathrm{CAlg}(D(k))}(R\Gamma(\mathcal{Y}, \mathscr O), B).$ This gives $\mathcal{Y}(B) \simeq \mathrm{Maps}_{\mathrm{CAlg}(D(k))}(R\Gamma(\mathcal{Y}, \mathscr O), B),$ as desired.
\end{proof}{}

\begin{remark}\label{easthall} We point out that the isomorphism $D_{\mathrm{qc}}(\mathcal{Y}) \simeq \mathrm{LMod}_{R\Gamma(\mathcal{Y}, \mathscr O)}$ in \cref{what} is very special to the stack $\mathcal{Y} = \mathbb{G}_a^{\w{dR}}$ that is being considered. For example, it is not true that $D_{qc}(B\mathbb{G}_a) \simeq \w{LMod}_{R\Gamma(B\mathbb{G}_a, \mathscr{O})}$ (over a perfect field $k$ of characteristic $p$). In fact, Hall, Neeman and Rydh proved \cite[Proposition 3.1]{HNR} that $D_{qc}(B \mathbb{G}_a)$ has no nonzero compact objects (negatively answering a question of Ben-Zvi). Their result immediately shows that such an isomorphism is impossible since $\w{LMod}_{R\Gamma(B\mathbb{G}_a, \mathscr{O})}$ is compactly generated. Therefore, the methods used in our proof above gives some techniques to prove certain positive results in this direction in specific situations.


\end{remark}{}

\newpage

\bibliographystyle{amsalpha}
\bibliography{main}

\end{document}